\renewcommand{\mathcal}{\mathscr}
\theoremstyle{definition} 
\newtheorem{theorem}{Theorem}[section]
\newtheorem{lemma}[theorem]{Lemma}
\newtheorem{remark}[theorem]{Remark}
\newtheorem*{thm*}{Theorem}
\title[Elephant random walks remembering the very recent past]{Limit theorems for elephant random walks remembering the very recent past, with applications to the Takagi--van der Waerden class functions}
\thanks{M.T. is partially supported by JSPS KAKENHI Grant Numbers JP19H01793, JP19K03514 and JP22K03333.
}
\author{Yuzaburo Nakano}
\address{Graduate School of Engineering Science, Yokohama National University, Yokohama, Japan}
\email{nakano-yuzaburo-zg@ynu.jp}
\author{Masato Takei}
\address{Department of Applied Mathematics, Faculty of Engineering, Yokohama National University, Yokohama, Japan}
\email{takei-masato-fx@ynu.ac.jp}
\begin{document}

\begin{abstract}
We study the Takagi--van der Waerden functions $f_r(x)$, a well-known class of continuous but nowhere differentiable functions, from probabilistic point of view. As an application of elephant random walks remembering the very recent past (ERWVRP, a.k.a. symmetric correlated random walks), we obtain precise estimates for the oscillations of $f_r(x)$. We also establish a result on the necessary and sufficient condition for localization of the ERWVRP with variable step length, which can be applied to obtain a complete description of the differentiability properties of the Takagi--van der Waerden class functions.

\end{abstract}

\maketitle

\section{Introduction}

The Takagi--van der Waerden functions are defined by 
    \begin{equation}\label{eq:TakagivanderWaerdenfunction}
        f_{r}(x) :=  \sum_{k=1}^{\infty} \frac{1}{r^{k-1}}d(r^{k-1}x)
    \end{equation}
    for all integer \(r\ge 2\), where \(d(x)\) denotes the distance from \(x\) to its nearest integer, i.e.
    \begin{equation}\label{eq:distancefrominteger}
        d(x) :=  \min_{n\in \mathbb{Z}}|x-n|.
    \end{equation}
    They constitutes a well-known class of continuous nowhere differentiable functions: 
    The function \(f_{2}\) is first studied by Takagi \cite{Takagi1903}, while \(f_{10}\) is introduced apparently independently by van der Waerden \cite{vanderWaerden30}. A short proof of the nondifferentiability of \(f_{r}\) is found in Allaart \cite[Theorem 2.1]{Allaart14JMAA}. For surveys and more references on the Takagi function and its relatives,  see \cite{AllaartKawamura11/12,JarnickiPflug2015,Lagarias12}. 
    
In his highly influential work \cite{Kono87ActaMathHungar}, K\^{o}no analyzed the several properties of the Takagi function and its generalizations, from probabilistic point of view. This approach is quite powerful, and after \cite{Kono87ActaMathHungar} many works in this direction appeared (See \cite{Gamkrelidze90,Allaart09JMSJ,Allaart14JMAA,deLimaSmania19,OsakaTakei20,HanSchiedZhang21} among others, and references therein).

\begin{figure}[t]
\label{fig:TakagivdWf2f3}
\begin{center}
\begin{tabular}{cc}
\includegraphics[width=5cm]{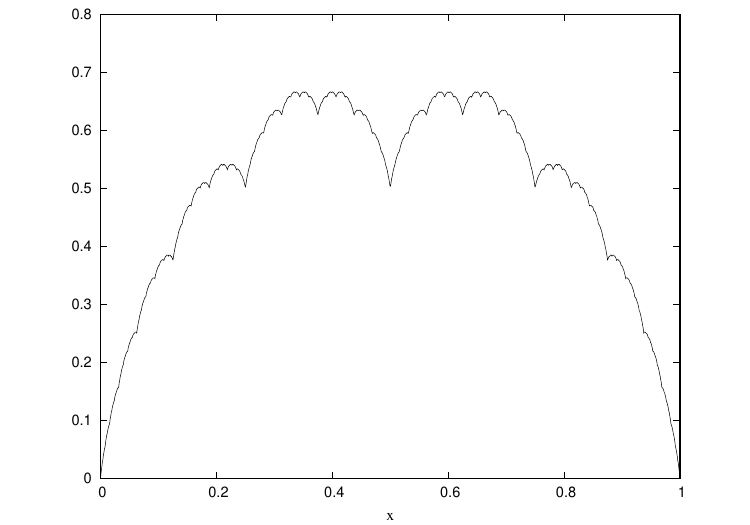}
&
\includegraphics[width=5cm]{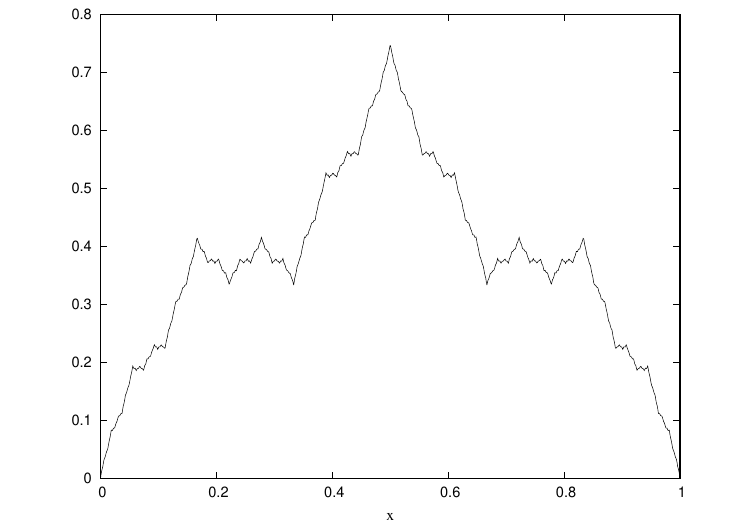}
\end{tabular}
\end{center}
\caption{The Takagi--van der Waerden functions $f_2(x)$ (left) and $f_3(x)$ (right).}
\end{figure}
    
As shown in Figure \ref{fig:TakagivdWf2f3}, the graphical properties of $f_r$ heavily depend on the parity of $r$. An essential difference between even $r$ and odd $r$ is well explained by the probabilistic approach first developed by Allaart \cite{Allaart14JMAA}.
We define a sequence of functions \( \{ \psi_{k}(x) : k \geq 1\} \) by 
    \begin{equation}\label{eq:psidefinition}
        \psi_{k}(x) :=  \frac{1}{r^{k-1}}d(r^{k-1}x)\quad \mbox{for $k \geq 1$.}
\end{equation}
Let \({\psi}_{k}^{+}(x)\) be the right-hand derivative of \({\psi}_{k}\) at \(x\), and we regard $\{ {\psi}_{k}^{+}(x) :k \geq 1\}$ as a \(\{+1,-1\}\)--valued stochastic process
on the probability space \(([0,1), \mathfrak{B}, \mu)\), where \(\mathfrak{B}\) is the Borel \(\sigma\)-field on \( [0,1)\), and \(\mu\) is the Lebesgue measure on \( [0,1) \).
Allaart \cite[Lemma 3.2]{Allaart14JMAA} showed that 
\(\{{\psi}_{k}^{+}(x) : k\ge 1 \}\) is a Markov chain with initial distribution 
\begin{equation}
    \mu({\psi}_{1}^{+}=+1)=\mu({\psi}_{1}^{+}=-1)=\frac{1}{2},
\end{equation}
and for every \(k\ge 1\),
\begin{gather}
\begin{cases}
    \mu({\psi}_{k+1}^{+}=+1\mid {\psi}_{k}^{+}=+1)=1-\mu({\psi}_{k+1}^{+}=-1\mid {\psi}_{k}^{+}=+1)=p_{r},\\
    \mu({\psi}_{k+1}^{+}=-1\mid {\psi}_{k}^{+}=-1)=1-\mu({\psi}_{k+1}^{+}=+1\mid {\psi}_{k}^{+}=-1)=p_{r},
\end{cases}\end{gather} 
where
\begin{equation}
        p_{r}=
        \begin{cases}
            \dfrac{1}{2} & \text{if $r$ is even,}\\
            \dfrac{r+1}{2r} & \text{if $r$ is odd.}
        \end{cases}
 \end{equation}
We set
\begin{equation}\label{eq:definitionrandomwalk}
   s_{0}(x)\equiv 0, \quad \mbox{and} \quad  s_{n}(x) :=  \sum_{k=1}^{n}{\psi}_{k}^{+}(x) \quad \mbox{for $n \ge 1$}.
\end{equation}
In \cite{Allaart14JMAA}, the stochastic process \(\{s_{n}(x) : n\ge 0 \}\) on the probability space \(([0,1), \mathfrak{B}, \mu)\) is called the {\it correlated random walk} (CRW) with parameter $p_r$. When \(r\) is even it is nothing but the symmetric simple random walk, while the increments of the walker have correlations when $r$ is odd. 

Recently there has been a lot of interest in the elephant random walk (ERW), introduced by Sch\"{u}tz and Trimper \cite{SchutzTrimper04}, and its variations. It is a nearest-neighbour random walk on the integers. Each time after the first step, the elephant selects a step from its past history (in the standard ERW in \cite{SchutzTrimper04} the elephant selects a step uniformly at random), and repeats the remembered step with probability $p$ or makes a step in the opposite direction with probability $1-p$.  An interesting feature of the standard ERW is phase transition at $p=3/4$ from diffusive regime to superdiffusive regime. See Laulin \cite{Laulin22PhD} for survey of this research field and further references. Gut and Stadtm\"{u}ller \cite{GutStadtmuller21JAP} initiated the study of several variations of the ERW, where the elephant selects a step not necessarily uniformly over the whole history. The research in this direction is developing very rapidly, and many papers including \cite{BenArietal21Brazil,GutStadtmuller22SPL,Laulin22ECP,ChenLaulin23,AguechElMachkouri24,RoyTakeiTanemura24a,RoyTakeiTanemura24b,BertenghiLaulin25} have appeared in recent years. As is pointed out in \cite[Sections 2 and 7]{GutStadtmuller21JAP}, the ERW {\it remembering the very recent past} (ERWVRP for short) coincides with the CRW. 
In retrospect, the traditional name ``correlated random walk" makes more or less an ambiguous impression, and in view of increasing interest on the ERW,  hereafter we use the ``ERWVRP" rather than the ``CRW".




The aim of the current paper is to explore several properties of the Takagi--van der Waerden class functions and their weighted version, as applications of limit theorems for the ERWVRP, possibly with variable step length. Our result on the local modulus of continuity of $f_r(x)$ (Theorem \ref{thm:main_theorem}) reflects how the nature of the oscillations of the corresponding ERWVRP depends on the parity of $r$ (Theorem \ref{thm:CorrRWCLTLIL}). Generalizing $f_r(x)$, we consider its weighted version
\begin{align} \label{def:TakagivdWclass}
f_{r,a}(x)  :=   \sum_{k=1}^{\infty} a_k \psi_k(x) = \sum_{k=1}^{\infty} \dfrac{a_k}{r^{k-1}} d(r^{k-1} x),
\end{align}
where $\{ a_k : k \geq 1 \}$ is a real sequence satisfying  
\begin{align} \label{def:TakagivdWclassl1}
\sum_{k=1}^{\infty} \left| \dfrac{a_k}{r^{k-1}} \right| <+\infty.
\end{align}
We call $f_{r,a}(x)$ the {\it Takagi--van der Waerden class function}. To classify its differentiability properties, 
we introduce the ERWVRP with variable step length, and establish a criterion for the localization of the elephant (Theorem \ref{thm:CorrRWRademacherKhintchineKolmogorov}). This result is quite different from the result on the standard ERW with variable step length \cite{Nakano25+}, and is of independent interest.
Also it enables us to obtain 
a complete description of the differentiability properties of $f_{r,a}(x)$ (Theorem \ref{thm:DIffTakagivdW}), which was left open for a long time when $r$ is odd.

The rest of the paper is organized as follows: We present our main results in Section \ref{sec:results}.
Limit theorems for the ERWVRP are proved in Section \ref{Sec:ProofERWRP}. Sections \ref{sec:ProofTakagiA} and \ref{sec:ProofTheoremDIffTakagivdW} are devoted to the proofs of our results on the Takagi-van der Waerden class functions.

\section{Results}
\label{sec:results}

\subsection{Limit theorems for the elephant random walk remembering the very recent past}

In this section the basic probability space is $(\Omega,\mathcal{F},P)$, and the expectation under $P$ is denoted by $E$.
Let $p \in (0,1)$,  and $\{X_k : k \geq 1\}$ be a $\{+1,-1\}$-valued Markov chain with
\begin{align*}
P(X_1=+1)=P(X_1=-1)=\dfrac{1}{2},
\end{align*}
and
\begin{align*}
\begin{cases}
P(X_{k+1}=+1 \mid X_k=+1)=P(X_{k+1}=-1 \mid X_k=-1)=p \\
P(X_{k+1}=-1 \mid X_k=+1)=P(X_{k+1}=+1 \mid X_k=-1)=1-p \\
\end{cases}
\mbox{for $k \ge 1$.}
\end{align*}
The  {\it elephant random walk remembering the very recent past} (ERWVRP) $\{T_n : n \geq 0 \}$ with memory parameter $p$ is defined by
\[ T_0  := 0, \quad \mbox{and} \quad T_n = \sum_{k=1}^n X_k \mbox{ for $n \geq 1$.} \]

\begin{theorem} \label{thm:CorrRWCLTLIL} Let $p \in (0,1)$. For the ERWVRP $\{T_n : n \geq 0 \}$ with memory parameter $p$, we have
\begin{align}
\lim_{n \to \infty} P\left( \dfrac{T_n}{\sqrt{n}} \leq y\sqrt{\dfrac{p}{1-p}} \right)=\frac{1}{\sqrt{2\pi}}\int_{-\infty}^{y}e^{-t^{2}/2}dt \quad \mbox{for $y \in \mathbb{R}$}, \label{eq:CorrRWCLT}
\end{align}
and
\begin{align}
\limsup_{n \to \infty} \dfrac{T_n}{\sqrt{2n\log \log n}} = \sqrt{\dfrac{p}{1-p}} \quad \mbox{$P$-a.s.} \label{eq:CorrRWLIL}
\end{align}
\end{theorem}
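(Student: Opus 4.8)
The plan is to reduce both \eqref{eq:CorrRWCLT} and \eqref{eq:CorrRWLIL} to limit theorems for a single martingale extracted from the Markov structure of $\{X_k\}$. Writing $\rho \coloneq 2p-1$ and $\mathcal{F}_k \coloneq \sigma(X_1,\dots,X_k)$, the transition law gives $E[X_{k+1}\mid \mathcal{F}_k]=E[X_{k+1}\mid X_k]=\rho X_k$ for $k\ge 1$. Hence, setting $D_1\coloneq X_1$ and $D_k\coloneq X_k-\rho X_{k-1}$ for $k\ge 2$, the sequence $\{D_k\}$ is a martingale difference sequence for $\{\mathcal{F}_k\}$, and iterating $X_k=\rho X_{k-1}+D_k$ yields $X_k=\sum_{j=1}^k \rho^{\,k-j}D_j$. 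Summing over $k$ and exchanging the order of summation, I would obtain the decomposition
\begin{equation*}
T_n=\sum_{k=1}^n X_k=\frac{1}{1-\rho}\sum_{j=1}^n D_j-\frac{1}{1-\rho}\sum_{j=1}^n \rho^{\,n-j+1}D_j \eqcolon \frac{1}{1-\rho}\,M_n+R_n,
\end{equation*}
where $M_n\coloneq\sum_{j=1}^n D_j$ is a martingale. Since $|D_j|\le 1+|\rho|<2$ and $|\rho|<1$, the remainder obeys $|R_n|\le \tfrac{1+|\rho|}{1-\rho}\sum_{m\ge 1}|\rho|^m<\infty$ uniformly in $n$ and $\omega$, so $R_n$ is bounded almost surely and disappears after normalizing by $\sqrt{n}$ or $\sqrt{2n\log\log n}$. (For $p=1/2$ one has $\rho=0$ and $R_n\equiv 0$, and $T_n=M_n$ is the simple random walk, for which both claims are classical.)

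The crucial computation is the conditional variance of the increments. For $k\ge 2$, since $X_{k-1}^2=1$ and $X_k=\pm X_{k-1}$ with probabilities $p$ and $1-p$, one checks
\begin{equation*}
E[D_k^2\mid \mathcal{F}_{k-1}]=p(1-\rho)^2+(1-p)(1+\rho)^2=4p(1-p),
\end{equation*}
a deterministic constant. Thus the quadratic characteristic is exactly $\langle M\rangle_n=1+4p(1-p)(n-1)\sim 4p(1-p)\,n$. With bounded increments and deterministic, linearly growing conditional variance, the martingale central limit theorem (e.g.\ Hall--Heyde) gives $M_n/\sqrt{n}$ converging in distribution to $N(0,4p(1-p))$. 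Dividing by $1-\rho=2(1-p)$ and using $R_n/\sqrt{n}\to 0$ then produces $T_n/\sqrt{n}$ converging to $N\!\left(0,\tfrac{p}{1-p}\right)$, which is precisely \eqref{eq:CorrRWCLT}.

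For \eqref{eq:CorrRWLIL} I would invoke the martingale law of the iterated logarithm (Stout; see also Hall--Heyde). Because the increments $D_k$ are bounded and $\langle M\rangle_n$ is deterministic with $\langle M\rangle_n\to\infty$, its hypotheses hold and give $\limsup_n M_n/\sqrt{2\langle M\rangle_n\log\log\langle M\rangle_n}=1$ a.s. Since $\langle M\rangle_n\sim 4p(1-p)\,n$, this reads $\limsup_n M_n/\sqrt{2n\log\log n}=\sqrt{4p(1-p)}$ a.s. Multiplying by $1/(1-\rho)$ and absorbing the almost surely bounded term $R_n$ yields
\begin{equation*}
\limsup_{n\to\infty}\frac{T_n}{\sqrt{2n\log\log n}}=\frac{\sqrt{4p(1-p)}}{2(1-p)}=\sqrt{\frac{p}{1-p}}\quad\text{a.s.},
\end{equation*}
as asserted.

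I expect the substantive content to be concentrated in the decomposition and the one-line conditional variance identity, with the remaining difficulties being purely bookkeeping: verifying that $R_n$ is genuinely bounded almost surely (immediate from the geometric weights and $|D_j|<2$) and checking the hypotheses of the martingale LIL. The latter is the step most likely to require care in the general literature, but it is trivialized here by the boundedness of the $D_k$ together with the fact that $\langle M\rangle_n$ is an exact deterministic constant growth, so no truncation or almost-sure variance-equivalence argument is needed.
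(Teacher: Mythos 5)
Your proof is correct, but it takes a genuinely different route from the paper. The paper's argument is mixing-based: it computes $Q^m$ explicitly, deduces that $\{X_k\}$ is $\phi$-mixing with $\phi(m)=|2p-1|^m/2$, verifies $E[(T_n)^2]\sim \frac{p}{1-p}\,n$ from the correlations $E[X_kX_{k+j}]=(2p-1)^j$, and then cites the CLT and LIL for $\phi$-mixing sequences (Billingsley, Theorem 27.5; Stout, Theorem 5.4.4). You instead exploit the autoregressive structure $X_k=\rho X_{k-1}+D_k$ with $\rho=2p-1$, reduce $T_n$ to the martingale $M_n/(1-\rho)$ plus a uniformly bounded remainder, and apply the martingale CLT and Stout's martingale LIL; your hypotheses are trivially checkable here because the increments are bounded ($|D_k|\le 1+|\rho|$) and the bracket $\langle M\rangle_n = 1+4p(1-p)(n-1)$ is an exact deterministic linear function, and the variance constant emerges transparently as $4p(1-p)/(2(1-p))^2=p/(1-p)$. (All your computations check out, including $1-\rho=2(1-p)>0$, which justifies both the geometric bound on $R_n$ and the sign-preservation when dividing the limsup.) What the paper's route buys is reuse: the $\phi$-mixing property established there does further work later in the paper --- it underlies Yoshihara's fourth-moment inequality and the tail-triviality argument in the proof of Theorem \ref{thm:CorrRWRademacherKhintchineKolmogorov}, and the formula for $Q^m$ feeds the spectral-density computation in Lemma \ref{lem:WeightedCorrelatedRW2ndMomentSpectral}. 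What your route buys is self-containedness and elementarity for Theorem \ref{thm:CorrRWCLTLIL} itself: no mixing theory is needed, only classical martingale limit theorems under Kolmogorov-type conditions, which your deterministic bracket satisfies without any truncation or variance-equivalence argument.
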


The central limit theorem (CLT) in \eqref{eq:CorrRWCLT} is proved in Gut and Stadtm\"{u}ller \cite[Theorem 7.1]{GutStadtmuller21JAP}, where a CLT for uniformly mixing Markov chains is used (see also \cite{BenArietal21Brazil}).  A completely different proof of \eqref{eq:CorrRWCLT}, based on a strong structural similarity between the ERWVRP and quantum walks, is given in Konno \cite[Theorem 3.2]{Konno09CorrRW}. 
A proof of the law of the iterated logarithm (LIL) in \eqref{eq:CorrRWLIL} is found in Allaart \cite[Proposition 6.3]{Allaart09JMSJ}. For the sake of completeness, we will sketch a proof of both limit theorems in Subsection \ref{sub:ProofERWRPCLTLIL} below.

Let $\{a_k : k \geq 1 \}$ be a (deterministic) real sequence, and define
\[ S_0  := 0, \quad \mbox{and} \quad S_n = \sum_{k=1}^n a_kX_k  \mbox{ for $n \geq 1$.} \]
We call $\{S_n : n \geq 0 \}$  the {\it ERWVRP with variable step length}. 
The next theorem gives a necessary and sufficient condition for the localization (convergence) of $\{S_n : n \geq 0\}$.

\begin{theorem} \label{thm:CorrRWRademacherKhintchineKolmogorov} 
Let $p \in (0,1)$ and $\{a_k : k \geq 1 \}$ be a real sequence. The following hold for the ERWVRP with variable step length $\{S_n : n \geq 0 \}$ with parameter $p$: \\
\noindent (i) If $\displaystyle \sum_{k=1}^{\infty} (a_k)^2<+\infty$ then there exists a square-integrable random variable $S$ such that $S_n$ converges to $S$  as $n \to \infty$ $P$-a.s. and in $L^2$. \\
\noindent (ii) If $\displaystyle \sum_{k=1}^{\infty} (a_k)^2=+\infty$ then $\{S_n\}$ diverges $P$-a.s.
\end{theorem}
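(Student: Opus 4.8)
The plan is to exploit the Markov structure of $\{X_k\}$ to reduce the problem to a martingale convergence argument in part (i), and to a zero–one law combined with a variance lower bound in part (ii). The key technical device will be the correlation structure of the ERWVRP: since $\{X_k\}$ is a stationary two-state Markov chain with $E[X_k]=0$ and $E[X_kX_{k+1}]=2p-1$, one computes that $E[X_jX_k]=(2p-1)^{|k-j|}$ for all $j,k$. Writing $\rho=2p-1 \in (-1,1)$, this geometric decay of correlations is what makes the second-moment method work. I would record this covariance computation first, as it underlies both parts.

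For part (i), the natural approach is to show that $\{S_n\}$ is an $L^2$-bounded object and then promote this to almost-sure convergence. First I would bound $E[S_n^2] = \sum_{j,k} a_ja_k \rho^{|k-j|}$ by $\sum_k a_k^2 + 2\sum_{j<k}|a_j||a_k|\,|\rho|^{k-j}$, and use the elementary inequality $2|a_j||a_k|\le a_j^2+a_k^2$ together with $\sum_{m\ge 1}|\rho|^m = |\rho|/(1-|\rho|)$ to conclude $\sup_n E[S_n^2]\le C\sum_k a_k^2<+\infty$. This gives $L^2$-boundedness. To obtain almost-sure and $L^2$ convergence, I would pass to a martingale: one cannot use $\{S_n\}$ directly since it is not a martingale, but a standard remedy for two-state Markov chains is a Doob-type decomposition. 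Concretely, I expect to build a martingale by subtracting the conditional drift, writing $X_{k+1}=\rho X_k + \xi_{k+1}$ where $\xi_{k+1}:=X_{k+1}-E[X_{k+1}\mid X_k]$ is a martingale-difference sequence with respect to the natural filtration $\mathcal{F}_k=\sigma(X_1,\dots,X_k)$. Summation by parts then expresses $S_n$ as a convergent series in the orthogonal increments $\xi_{k}$ plus a remainder controlled by $\sum a_k^2$; the martingale part converges a.s. and in $L^2$ by the $L^2$-bounded martingale convergence theorem, and the remainder is handled by the same geometric estimate.

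For part (ii), the strategy is a converse second-moment argument combined with a tail zero–one law. First I would argue that the event $\{S_n \text{ converges}\}$ is a tail event for the sequence $\{X_k\}$ up to the drift correction, so by an appropriate Kolmogorov-type zero–one law it has probability $0$ or $1$; hence it suffices to rule out convergence with positive probability. I would then show that $\sum_k a_k^2=+\infty$ forces the variance of the increments to be non-summable: using the martingale-difference representation $X_{k+1}=\rho X_k+\xi_{k+1}$, the orthogonal component $\sum_k a_k\xi_k$ has variance $\sum_k a_k^2\,\mathrm{Var}(\xi_k)$, and since $\mathrm{Var}(\xi_k)$ is bounded below by a positive constant $1-\rho^2$, this series of variances diverges, so the martingale part cannot converge. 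The contrapositive of the martingale convergence theorem (a square-integrable martingale with increments of non-summable variance diverges a.s.) then yields divergence of the martingale part, and transferring this back to $S_n$ completes the argument.

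The main obstacle I anticipate is the passage from $\{S_n\}$ to a genuine martingale in a clean way, because the correlated increments mean $S_n$ itself is neither a martingale nor a sum of independent terms. The summation-by-parts/Doob-decomposition step must be set up so that the drift-correction remainder is provably $\ell^2$-summable (and hence harmless) under the hypothesis of part (i), while simultaneously the orthogonal martingale part carries the full $\sum a_k^2$ behaviour needed for part (ii). Getting a single algebraic identity $S_n = (\text{convergent remainder}) + (\text{martingale})$ that serves both directions is the delicate point; once that identity is in hand, both parts follow from standard $L^2$-martingale theory and the geometric correlation decay.
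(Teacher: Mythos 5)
Your route is genuinely different from the paper's, and in outline it is viable. The paper proves a two-sided second-moment bound $K(p)^{-1}\sum_{m<k\le n}(a_k)^2 \le E[(S_n-S_m)^2] \le K(p)\sum_{m<k\le n}(a_k)^2$ via the spectral density of the stationary chain (a Poisson kernel bounded above and below by positive constants); part (i) then follows from $L^2$-Cauchyness plus a subsequence argument upgraded to full a.s.\ convergence by Yoshihara's fourth-moment inequality for $\phi$-mixing sequences combined with M\'oricz's maximal inequality, and part (ii) from an Egorov/Cauchy--Schwarz argument showing that a.s.\ convergence forces $\sum_k (a_k)^2<\infty$, finished off by tail triviality of the $\phi$-mixing chain. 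Your AR(1)/martingale route instead exploits the exact identities $E[X_{k+1}\mid\mathcal{F}_k]=\rho X_k$ with $\rho=2p-1$ and $E[\xi_{k+1}^2\mid\mathcal{F}_k]=1-\rho^2$ (the latter because $X_k^2\equiv 1$), avoiding Yoshihara and M\'oricz entirely; carried out properly it is more elementary and self-contained, at the price of being tied to the Markov structure, whereas the paper's mixing machinery is more robust. Note also that you only ever need the easy upper variance bound; the paper's lower bound (the harder half of its Lemma 3.1) is replaced in your scheme by the martingale structure.

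That said, your sketch has three concrete defects that must be repaired. First, the martingale part is not $\sum_k a_k\xi_k$: with $\xi_1:=X_1$ and $X_k=\sum_{j\le k}\rho^{k-j}\xi_j$, summation by parts yields the exact identity $S_n=M_n-\rho\, b_{n+1}X_n$, where $M_n=\sum_{j\le n}b_j\xi_j$ and $b_j=\sum_{k\ge j}a_k\rho^{k-j}$. You then need the two-sided $\ell^2$-equivalence between $\{a_k\}$ and $\{b_k\}$: $\|b\|_{\ell^2}\le(1-|\rho|)^{-1}\|a\|_{\ell^2}$ by Young's inequality, and conversely $a_j=b_j-\rho b_{j+1}$ gives $\|a\|_{\ell^2}\le(1+|\rho|)\|b\|_{\ell^2}$. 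You flagged this as the delicate point but did not supply it, and without it neither direction closes. Second, the principle you invoke for (ii) --- ``a square-integrable martingale with increments of non-summable variance diverges a.s.'' --- is false in general: take independent $Y_k$ with $P(Y_k=\pm 2^k)=2^{-2k-1}$ and $Y_k=0$ otherwise; then $\sum_k \mathrm{Var}(Y_k)=\infty$, yet $\sum_k Y_k$ converges a.s.\ by Borel--Cantelli. You must use the bounded-increment version: for a martingale with uniformly bounded differences, a.s.\ $\{M_n \text{ converges}\}=\{\sum_k E[(\Delta M_k)^2\mid\mathcal{F}_{k-1}]<\infty\}$. In your setting the differences $b_j\xi_j$ are uniformly bounded and the conditional variances $(1-\rho^2)b_j^2$ are deterministic with divergent sum, so $P(M_n \text{ converges})=0$ directly --- this also makes your zero--one-law detour unnecessary. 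Third, in (ii) the coefficients $b_j$ need not even be defined when $a_k$ grows super-exponentially (e.g.\ $a_k=\rho^{-k}$); dispose first of the trivial case $\limsup_k|a_k|>0$, where $|S_n-S_{n-1}|=|a_n|\not\to 0$ forces divergence of every path, and run the martingale argument only when $a_k\to 0$, which in addition gives $b_{n+1}\to 0$, so the remainder $\rho\, b_{n+1}X_n$ vanishes a.s.\ and divergence of $M_n$ transfers to $S_n$.
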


One of the authors \cite{Nakano25+} studied the limiting behaviour of the standard ERW with step length $a_k=k^{-\gamma}$ with $\gamma>0$, and showed that 
$\gamma>1/2$ is not sufficient for localization if the memory parameter $p>3/4$ --- strong memory effects prevent localization.  This contrasts with Theorem \ref{thm:CorrRWRademacherKhintchineKolmogorov}, which says that even if $p$ is very close to $1$, the localization criterion is exactly the same as that of the memoryless case $p=1/2$.

\subsection{Applications to the Takagi--van der Waerden class functions} About the oscillation of the Takagi--van der Waerden functions $f_r(x)$, 
Allaart and Kawamura \cite[p.32]{AllaartKawamura11/12} pointed out that for {\it fixed} $x \in [0,1]$,
\begin{align*}
-1 \leq \liminf_{h \to 0}  \frac{f_{r}(x+h)-f_{r}(x)}{h\log_{r}(1/|h|)} \leq \limsup_{h \to 0}  \frac{f_{r}(x+h)-f_{r}(x)}{h\log_{r}(1/|h|)} \leq 1.
\end{align*}
As an application of Theorem \ref{thm:CorrRWCLTLIL}, we prove the following theorem about the local modulus of the continuity of $f_r(x)$,
which extend the results obtained by K\^{o}no \cite{Kono87ActaMathHungar} and Gamkrelidze \cite{Gamkrelidze90} for $r=2$.

\begin{theorem}\label{thm:main_theorem}
        We consider the Takagi--van der Waerden functions \(f_{r}(x)\) defined by \eqref{eq:TakagivanderWaerdenfunction}.
        
            \noindent (1) If \(r\) is even, then
                \begin{align}
                    &\lim_{h\to 0} \mu\left(\left\{ x\in [0,1) : \frac{f_{r}(x+h)-f_{r}(x)}{h\sqrt{\log_{r}(1/|h|)}} \le y \right\} \right)=\int_{-\infty}^{y}\frac{1}{\sqrt{2\pi}}e^{-t^{2}/2}dt, \label{eq:CLTeven}\\
                    &\limsup_{h\to 0}  \frac{f_{r}(x+h)-f_{r}(x)}{h\sqrt{2\log_{r}(1/|h|)\log\log\log_{r}(1/|h|)}}=1 \quad \text{for $\mu$-a.e.}~ x. \label{eq:LILeven}
                \end{align}
                (2) If \(r\) is odd, then
                \begin{align}
                    &\lim_{h\to 0}\mu\left(\left\{ x\in [0,1) : \frac{f_{r}(x+h)-f_{r}(x)}{h\sqrt{\log_{r}(1/|h|)}}\le y\sqrt{\frac{r+1}{r-1}} \right\} \right)=\int_{-\infty}^{y}\frac{1}{\sqrt{2\pi}}e^{-t^{2}/2}dt, \label{eq:CLTodd} \\
                    &\limsup_{h\to 0} \frac{f_{r}(x+h)-f_{r}(x)}{h\sqrt{2\log_{r}(1/|h|)\log\log\log_{r}(1/|h|)}}=\sqrt{\frac{r+1}{r-1}} \quad \text{for $\mu$-a.e.}~ x. \label{eq:LILodd}
                \end{align}
    \end{theorem}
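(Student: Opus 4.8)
The plan is to reduce the (forward) difference quotient $(f_r(x+h)-f_r(x))/h$ to the walk $s_n(x)=\sum_{k=1}^n\psi_k^+(x)$ of \eqref{eq:definitionrandomwalk}, which under $\mu$ is precisely the ERWVRP with memory parameter $p=p_r$, and then to invoke Theorem~\ref{thm:CorrRWCLTLIL}. Note first that $p_r/(1-p_r)=1$ for even $r$ and $p_r/(1-p_r)=(r+1)/(r-1)$ for odd $r$, so the variance factor $\sqrt{p/(1-p)}$ in \eqref{eq:CorrRWCLT}--\eqref{eq:CorrRWLIL} produces exactly the scalings appearing in \eqref{eq:CLTeven}--\eqref{eq:LILodd}. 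Since $\psi_k'(x)=d'(r^{k-1}x)$, the right-hand derivative $\psi_k^+(x)$ is the slope $\pm1$ of $d$ at $r^{k-1}x$; by Allaart's lemma the process $\{s_n(x)\}$ under $\mu$ has the same law as $\{T_n\}$ under $P$, so \emph{every} assertion of Theorem~\ref{thm:CorrRWCLTLIL}, distributional and almost sure alike, transfers verbatim to $s_n$.

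First I would establish the increment formula. For the $r$-adic increment $h=r^{-n}$ the high-frequency terms vanish identically: if $k>n$ then $r^{k-1-n}$ is a positive integer and the $1$-periodicity of $d$ gives $\psi_k(x+r^{-n})=\psi_k(x)$. For $k\le n$ the displacement $r^{k-1-n}\le r^{-1}\le 1/2$ of the argument of $d$ is at most the spacing of the break points (the half-integers, where $d$ changes slope), so $d$ is affine on the relevant interval \emph{unless} a single break point is crossed; in the affine case the $k$-th term equals $\psi_k^+(x)\,r^{-n}$, and in the crossing case it deviates from this by at most $2r^{-n}$. Writing $N_n(x)$ for the number of indices $k\le n$ at which a crossing occurs, we obtain
\[
\frac{f_r(x+r^{-n})-f_r(x)}{r^{-n}}=s_n(x)+R_n(x),\qquad |R_n(x)|\le 2N_n(x).
\]
For general $h$ with $r^{-(n+1)}\le h<r^{-n}$ I would split the series at $k=n$: using $0\le d\le 1/2$ the tail obeys $\bigl|\sum_{k>n}(\psi_k(x+h)-\psi_k(x))\bigr|\le \tfrac12\sum_{k>n}r^{-(k-1)}=O(h)$, while the head again reproduces $h\,s_n(x)$ up to a break-crossing error. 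This gives $(f_r(x+h)-f_r(x))/h=s_n(x)+O(1)+O(N_n(x))$ with $n=\lfloor\log_r(1/h)\rfloor$, and crucially the leading term $s_n(x)$ is constant as $h$ runs over the block $[r^{-(n+1)},r^{-n})$.

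The main obstacle is the control of $N_n$. As $x$ runs over $[0,1)$ the fractional part of $r^{k-1}x$ is uniform and the break points have spacing $1/2$, so the $\mu$-probability that index $k$ produces a crossing is at most $2r^{-(n-k+1)}$; summing this geometric series shows that the expected number of crossings is $O(1)$, uniformly in $n$. For the central limit theorems \eqref{eq:CLTeven} and \eqref{eq:CLTodd} this already suffices, since then $N_n/\sqrt{n}\to 0$ in $\mu$-probability and Slutsky's theorem combined with \eqref{eq:CorrRWCLT} (and $\sqrt{n}/\sqrt{\log_r(1/h)}\to1$) yields the claim. For the laws of the iterated logarithm \eqref{eq:LILeven} and \eqref{eq:LILodd} I need the stronger almost sure bound $N_n=o(\sqrt{n\log\log n})$; because $N_n$ is a sum of Bernoulli variables whose means decay geometrically in $n-k$, its tail $\mu(N_n\ge t)$ decays faster than any power of $t$, so $\sum_n\mu(N_n\ge \varepsilon\sqrt{n\log\log n})<\infty$ and Borel--Cantelli gives the bound $\mu$-a.e. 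The only extra point for the $\limsup$ is that the error supremum over a whole block is dominated by the $h=r^{-n}$ count, which obeys the same estimate.

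Finally I would assemble the limits. With $n=\lfloor\log_r(1/h)\rfloor$, the ratios $\sqrt{n}/\sqrt{\log_r(1/h)}\to1$ and $\sqrt{2\log_r(1/h)\log\log\log_r(1/h)}\big/\sqrt{2n\log\log n}\to1$ let me replace $\log_r(1/h)$ by $n$ in both denominators. The central limit theorems then follow from \eqref{eq:CorrRWCLT} via Slutsky, and the laws of the iterated logarithm from \eqref{eq:CorrRWLIL} together with the $\mu$-a.e.\ negligibility of $R_n$; here the constancy of $s_n(x)$ on each block is what turns $\limsup_n$ into $\limsup_{h\to0}$. The case $h\to0^-$ needs no new idea: the backward quotient reduces to $\sum_{k=1}^n\psi_k^-(x)$, and since $\psi_k^-(x)=\psi_k^+(x)$ for all $k$ outside the countable (hence $\mu$-null) set $\{x:r^{k-1}x\in\tfrac12\mathbb{Z}\ \text{for some }k\}$, this is the same walk $s_n(x)$ for $\mu$-a.e.\ $x$. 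I expect the $\mu$-a.e.\ control of the crossing count $N_n$ and the interpolation across blocks to be the two most delicate points.
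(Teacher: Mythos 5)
Your proposal is correct and reaches the theorem by the same overall strategy as the paper --- approximate $f_r(x+h)-f_r(x)$ by $h\,s_m(x)$ with $m\approx\log_r(1/h)$, show the nonlinear indices are $\mu$-a.e.\ negligible at the scale $\sqrt{m}$, then transfer Theorem \ref{thm:CorrRWCLTLIL} with $p=p_r$ via Slutsky and block interpolation --- but your key lemma is genuinely different. The paper controls the error through the digit-agreement index $k_0(x,h)$, which is only \emph{sufficient} for linearity of $\psi_k$ on $[x,x+h]$ when $r$ is even, and therefore needs a separate device for odd $r$: the shifted index $\widehat{k}_0=k_0\left(x+\frac12,h\right)$, whose relevance rests on the parity identity $\left\lfloor r^{k-1}\left(x+\frac12\right)\right\rfloor=\left\lfloor r^{k-1}\left(x+\frac{1}{2r^{k-1}}\right)\right\rfloor+\frac{r^{k-1}-1}{2}$ (Lemmas \ref{lem:p-k0convergentzero} and \ref{lem:p-k0hatk0convergentzero}). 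Your crossing count $N_n$ instead works directly with the exact linearity criterion \eqref{cond:psiklinearGraph}: $\psi_k$ fails to be affine on $[x,x+h]$ iff a half-integer lies in $(r^{k-1}x,\,r^{k-1}(x+h)]$, and since $x\mapsto r^{k-1}x \bmod 1$ preserves $\mu$, the crossing probability at level $k\le n$ is at most $2r^{-(n-k+1)}$ regardless of parity. This buys a unified treatment of even and odd $r$ (parity enters only through $p_r$ in the limit theorems), and your handling of $h\uparrow 0$ via $\psi_k^-=\psi_k^+$ off a countable set is simpler than the paper's $f_r(1-x)$ symmetry; the paper's route, in exchange, needs only elementary one-event digit-run estimates and never confronts dependence among the error indicators.

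That dependence is the one place where your sketch asserts more than it proves: for the LIL you claim $\mu(N_n\ge t)$ decays faster than any power of $t$ ``because $N_n$ is a sum of Bernoullis with geometrically decaying means.'' For dependent indicators this inference is not automatic --- Markov's inequality from $E[N_n]=O(1)$ gives only $O(1/t)$, which does not suffice for your Borel--Cantelli step. Fortunately the dependence here works in your favor and the claim is true with geometric decay: if $q\in\frac12\mathbb{Z}$ lies in $(r^{k-1}x,\,r^{k-1}(x+h)]$, then $rq\in\frac{r}{2}\mathbb{Z}\subset\frac12\mathbb{Z}$ lies in $(r^{k}x,\,r^{k}(x+h)]$, so a crossing at level $k$ forces crossings at all levels $k,k+1,\dots,n$; the crossing set is thus a terminal block, $\{N_n\ge t\}$ is contained in the union over $k\le n-t+1$ of the level-$k$ crossing events, and a union bound gives $\mu(N_n\ge t)\le\sum_{k\le n-t+1}2r^{-(n-k+1)}\le\frac{2r}{r-1}\,r^{-t}$ --- the exact analogue of the paper's bound $\mu(m-k_0\ge j)\le r^{-(j-1)}$ in \eqref{ineq:m-k0tail}. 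With this inserted, your monotonicity-in-$h$ of crossings within each block, the constancy of $s_n$ on blocks, and the normalizer asymptotics complete both the CLT and the LIL exactly as you outline; the rest of the proposal is sound.
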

    
We mention two related results. Han, Schied, and Zhang \cite{HanSchiedZhang21} showed that the $\Phi$-variation of $f_r$ depends on the parity of $r$, by using a process essentially the same as the ERWVRP.
The Takagi--van der Waerden function $f_r(x)$ satisfies the functional equation
\begin{align} \label{eq:TvdWfunctionalEq}
f_r(rx) - r \cdot f_r(x) = -r \cdot d(x).
\end{align}
de Lima and Smania \cite{deLimaSmania19} obtained a similar result as Theorem \ref{thm:main_theorem} above (and much more) about the continuous function which is a unique bounded solution of the twisted cohomological equation, a generalization of the functional equation \eqref{eq:TvdWfunctionalEq}. Unfortunately their theory is not applicable to $f_r(x)$ itself, since $d(x) \notin C^{1+\varepsilon}$. We prove Theorem   \ref{thm:main_theorem} by establishing a suitable approximation of $f_r(x+h)-f_r(x)$ by the ERWVRP.
 
The next theorem gives a classification of the differentiability properties of Takagi--van der Waerden class  functions, and is proved by an application of Theorem \ref{thm:CorrRWRademacherKhintchineKolmogorov}.

 \begin{theorem} \label{thm:DIffTakagivdW} Let $\{ a_k : k \geq 1 \}$ be a real sequence satisfying \eqref{def:TakagivdWclassl1}, and consider the continuous function $f_{r,a}(x)$ defined by \eqref{def:TakagivdWclass}. \\
(i) If $\displaystyle \sum_{k=1}^{\infty} (a_k)^2<+\infty$, then  $f_{r,a}(x)$ is absolutely continuous, and hence differentiable for $\mu$-a.e. $x$. \\
(ii) If $\displaystyle \sum_{k=0}^{\infty} (a_k)^2=+\infty$ but $\displaystyle \lim_{k \to \infty} a_k=0$, then $f_{r,a}(x)$ is differentiable on a set of continuum and the range of the derivative is $\mathbb{R}$, but $f_{r,a}(x)$ is nondifferentiable for $\mu$-a.e. $x$. \\
(iii) If $\displaystyle \limsup_{k \to \infty} |a_k| >0$, then $f_{r,a}(x)$ has nowhere finite derivative. 
\end{theorem}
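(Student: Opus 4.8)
The plan is to prove each of the three dichotomy cases of Theorem \ref{thm:DIffTakagivdW} by reducing questions about the finite derivative of $f_{r,a}$ to the convergence/divergence behavior of the ERWVRP with variable step length, exploiting the probabilistic structure from Allaart's lemma. Recall that the right-hand derivative of $\psi_k$ at $x$ equals $r^{-(k-1)}\cdot\psi_k^+(x)\cdot r^{k-1}=\pm1$ on dyadic-type intervals; more precisely $\psi_k'(x)=\psi_k^+(x)\in\{+1,-1\}$ wherever it exists, and $\{\psi_k^+(x):k\ge1\}$ is exactly the Markov chain $\{X_k\}$ of Theorem \ref{thm:CorrRWRademacherKhintchineKolmogorov} under the Lebesgue measure. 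The key formal object is the partial-sum process $S_n(x)=\sum_{k=1}^n a_k\psi_k^+(x)$, which is precisely the ERWVRP with variable step length with parameter $p_r$, so Theorem \ref{thm:CorrRWRademacherKhintchineKolmogorov} governs whether $S_n(x)$ converges. The heuristic is that $f_{r,a}'(x)$, when it exists, should equal $\lim_{n\to\infty}S_n(x)$, since differentiating the series \eqref{def:TakagivdWclass} term by term gives $\sum_k a_k\psi_k^+(x)$.

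First I would treat case (i). Since $\sum(a_k)^2<+\infty$, Theorem \ref{thm:CorrRWRademacherKhintchineKolmogorov}(i) gives an $L^2$ and almost-sure limit $S$ of $S_n(x)$. The goal is to upgrade this to absolute continuity of $f_{r,a}$. The natural route is to show $f_{r,a}(x)=f_{r,a}(0)+\int_0^x S(t)\,dt$, i.e.\ that $S$ is the density of $f_{r,a}$. To see this, I would note that each partial sum $F_N(x):=\sum_{k=1}^N a_k\psi_k(x)$ is piecewise linear with $F_N'(x)=S_N(x)$ away from finitely many points, so $F_N(x)=F_N(0)+\int_0^x S_N(t)\,dt$; letting $N\to\infty$ and using uniform convergence of $F_N$ together with $L^1$ (hence $L^2$) convergence of $S_N$ to $S$ on $[0,1]$ passes the integral representation to the limit. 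Absolute continuity and a.e.\ differentiability with $f_{r,a}'=S$ then follow from the fundamental theorem of calculus for Lebesgue integrals.

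Next, case (iii): if $\limsup_k|a_k|>0$, then $\sum(a_k)^2=+\infty$, so by Theorem \ref{thm:CorrRWRademacherKhintchineKolmogorov}(ii) the process $S_n(x)$ diverges for a.e.\ $x$, but I need the stronger conclusion that the finite derivative fails to exist at \emph{every} $x$. The argument here is deterministic rather than probabilistic: at any point $x$, choosing $h=r^{-n}$ and examining the increment ratio $[f_{r,a}(x+h)-f_{r,a}(x)]/h$, the dominant contribution from the $n$-th level term $a_n\psi_n$ oscillates with amplitude bounded below by a fixed constant times $\limsup|a_k|$ along a subsequence, while the tail $\sum_{k>n}$ contributes a vanishing amount and the head $\sum_{k\le n-1}$ can be shown not to cancel it due to the scale separation. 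Making this precise, I would compare left and right increments at scale $r^{-n}$ and show their difference does not tend to zero, so no finite two-sided derivative can exist anywhere.

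Case (ii) is the delicate middle ground and I expect it to be the main obstacle. Here $\sum(a_k)^2=+\infty$ yet $a_k\to0$, so by Theorem \ref{thm:CorrRWRademacherKhintchineKolmogorov}(ii) $S_n(x)$ diverges a.e., giving nondifferentiability for $\mu$-a.e.\ $x$; the delicate part is constructing a set of cardinality continuum where the derivative nonetheless exists and realizing every real value as a derivative. For existence on a large set I would build points $x$ by prescribing the sign sequence $\varepsilon_k=\psi_k^+(x)$ by hand so that the series $\sum_k a_k\varepsilon_k$ converges (possible because $a_k\to0$ allows many cancelling sign choices, and there is a continuum of admissible $\varepsilon$-sequences compatible with the Markov constraint), then verifying that at such $x$ the increment ratios genuinely converge to $\sum_k a_k\varepsilon_k$ rather than merely having the formal series converge --- this requires controlling the approximation of $f_{r,a}(x+h)-f_{r,a}(x)$ by $S_n(x)$ uniformly in the off-grid behavior of $h$. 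To realize an arbitrary prescribed value $v\in\mathbb{R}$ as a derivative, I would adapt the sign-prescription so that $\sum_k a_k\varepsilon_k=v$, using that $a_k\to0$ makes the partial sums of $\sum a_k\varepsilon_k$ reach any target by a greedy/sign-adjustment scheme; the surjectivity onto $\mathbb{R}$ then follows. The technical heart throughout is establishing the precise quantitative link between the genuine difference quotient $[f_{r,a}(x+h)-f_{r,a}(x)]/h$ and the discrete walk $S_n(x)$, controlling the error terms from non-dyadic $h$ and from the tail of the series uniformly enough to convert statements about $S_n(x)$ into statements about the actual derivative.
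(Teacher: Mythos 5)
Your overall reduction---identify $\{\psi_k^+(x)\}$ under $\mu$ with the ERWVRP chain at parameter $p_r$, and let Theorem \ref{thm:CorrRWRademacherKhintchineKolmogorov} decide the convergence of $S_n(x)=\sum_{k\le n}a_k\psi_k^+(x)$---is exactly the paper's idea. But be aware that the paper proves none of the deterministic analysis itself: it cites Ferrera--G\'{o}mez-Gil \cite[Example 5.1]{FerreraGomezGil20Diff} for ``$f_{r,a}$ is differentiable $\mu$-a.e.\ if and only if $\sum_k a_k\psi_k^+(x)$ converges $\mu$-a.e.'' together with the absolute-continuity, continuum-set and range-$\mathbb{R}$ statements, and \cite[Theorem 1]{FerreraGomezGil25} for ``nowhere finite derivative if and only if $a_k\not\to 0$''; its sole new input is Theorem \ref{thm:CorrRWRademacherKhintchineKolmogorov}. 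Measured against that, your case (i) is actually complete and more self-contained than the paper's: the representation $f_{r,a}(x)=\int_0^x S(t)\,dt$ via the piecewise linear partial sums $F_N$, uniform convergence of $F_N$, and $L^2(\mu)$- (hence $L^1$-) convergence of $S_N$ to $S$ is a correct direct proof of absolute continuity and a.e.\ differentiability.

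The genuine gaps are in (ii) and (iii), exactly the parts you attempt from scratch. For (iii), the claim ``$\limsup_k|a_k|>0$ implies nowhere finite derivative'' is the main theorem of \cite{FerreraGomezGil25}, and your sketch would fail as stated: at a point such as $x=1/2$, which is a kink of \emph{every} $\psi_k$, each head term $k\le n-1$ contributes an $O(|a_k|)$ discrepancy between the left and right slopes, and nothing (no ``scale separation'') prevents these from cancelling the level-$n$ contribution; showing non-cancellation at \emph{every} $x$ is the hard content. Moreover, for odd $r$ the $r$-adic grid does not refine the linearity grid $\{j/(2r^{k-1})\}$ of $\psi_k$ (this misalignment is precisely why $p_r\neq 1/2$), so the telescoping of nested grid quotients that works for even $r$ breaks on intervals straddling a kink.

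For (ii) there are two missing links. First, a.e.\ divergence of $S_n(x)$ yields a.e.\ nondifferentiability only via a lemma that a finite $f_{r,a}'(x)$ forces convergence of $S_n(x)$; the Section \ref{sec:ProofTakagiA} approximation does not transfer to general weights, since the middle error term there becomes of order $h\,(m-k_0)\max_{k>k_0}|a_k|$, and with $m-k_0\asymp\log m$ this is $o(h)$ only when $a_k=o(1/\log k)$, not for all $a_k\to 0$. Second, at your constructed points, convergence of the formal series $\sum_k a_k\varepsilon_k$ does not by itself give differentiability: one must control difference quotients over arbitrary, non-grid-aligned $h$, which is where the general theorems of \cite{FerreraGomezGil20Diff} do real work; you correctly flag this as ``the technical heart'' but supply no mechanism. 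So the proposal fully proves (i), correctly reduces (ii) and (iii) to the right deterministic statements, but leaves those statements---which the paper closes by citation---unproven.
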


Ferrera, G\'{o}mez-Gil and Llorente \cite{FerreraGomezGil20Diff,FerreraGomezGil25} explored the differentiability properties of the generalized Takagi class functions, which contains the Takagi--van der Waerden class functions as an important subclass. Gathering the results in \cite{FerreraGomezGil20Diff,FerreraGomezGil25}, Theorem \ref{thm:DIffTakagivdW} for even $r$ can be obtained (see Section \ref{sec:ProofTheoremDIffTakagivdW} below for more details).
Combining their theory with our Theorem \ref{thm:CorrRWRademacherKhintchineKolmogorov}, we can establish Theorem \ref{thm:DIffTakagivdW} for odd $r$ also.

\section{Proofs of limit theorems for the ERWVRP}
\label{Sec:ProofERWRP}

\subsection{Preliminaries and a proof of Theorem \ref{thm:CorrRWCLTLIL}}
\label{sub:ProofERWRPCLTLIL}

Let $Q$ be the transition probability matrix for $\{X_k : k \geq 1 \}$, i.e.
\[ Q=\begin{pmatrix} p & 1-p \\ 1-p & p \\ \end{pmatrix} = \dfrac{1}{2} \begin{pmatrix} 1 & 1 \\ 1 & 1 \\ \end{pmatrix} +  \left(p-\dfrac{1}{2}\right) \cdot \begin{pmatrix} 1 & -1 \\ -1 & 1 \\ \end{pmatrix} =: Q_1+Q_2. \]
Since $(Q_1)^2=Q_1$, $(Q_2)^2=(2p-1) Q_2$ and $Q_1Q_2=Q_2Q_1=O$, we have
\begin{align}
Q^m =  \dfrac{1}{2} \begin{pmatrix} 1 & 1 \\ 1 & 1 \\ \end{pmatrix} + \dfrac{(2p-1)^m}{2} \cdot \begin{pmatrix} 1 & -1 \\ -1 & 1 \\ \end{pmatrix}\quad \mbox{for $m \in \mathbb{N}$}, \label{eq:Q^mExplicit}
\end{align}
which implies that
\[ |P(X_{n+m} = j \mid X_n=i) - P(X_n=i)| = \dfrac{|2p-1|^m}{2} =: \phi(m) \]
for $n \in \mathbb{Z}_+$, $m \in \mathbb{N}$, and $i,j \in \{+1,-1\}$.
As in  Stout \cite[Example 3.7.2]{Stout74}, we can see that $\{X_k : k \geq 1 \}$ is $\phi$-mixing:
For $A \in \sigma\{ X_k : 1 \leq k \leq n\}$ and $B \in \sigma \{ X_k : k \geq n+m \}$, 
\[
|P(A \cap B) - P(A)P(B)| \leq P(A) \cdot \phi(m).
\]

It is easy to see that $P(X_k=+1)=1/2$ for all $k \in \mathbb{N}$. By \eqref{eq:Q^mExplicit}, 
\[ P(X_k = X_{k+j}) = \dfrac{1+(2p-1)^j}{2}\quad \mbox{for $k,j \in \mathbb{N}$}, \]
which implies that
\begin{align}
 E[X_kX_{k+j}] =  P(X_k = X_{k+j}) -  P(X_k \neq X_{k+j}) = {(2p-1)^j} \quad \mbox{for $k,j \in \mathbb{N}$}.
 \label{eq:CorrMCnaive}
\end{align}
Since
\begin{align*}
E[(T_n)^2] &= \sum_{k=1}^n E[(X_k)^2]+2 \sum_{1 \leq k < \ell \leq n} E[X_kX_{\ell}] 
= n + 2 \sum_{j=1}^{n-1} (n-j) \cdot (2p-1)^j \\
&\sim n +  2n\sum_{j=1}^{\infty} (2p-1)^j = \dfrac{p}{1-p}n \quad \mbox{as $n \to \infty$,}
\end{align*}
Theorem \ref{thm:CorrRWCLTLIL} is a consequence of the CLT and the LIL for $\phi$-mixing sequences (see \cite[Theorem 27.5]{Billingsley79} and \cite[Theorem 5.4.4]{Stout74}).

\subsection{Proof of Theorem \ref{thm:CorrRWRademacherKhintchineKolmogorov}}

Throughout this section, $\{a_k : k \geq 1 \}$ denotes a deterministic real sequence. 

\begin{lemma} \label{lem:WeightedCorrelatedRW2ndMomentSpectral} Assume that  $p \in (0,1)$. For any $\{a_k : k \geq 1 \}$ and any integers $n>m \ge 1$, we have
\[\dfrac{1}{K(p)} \sum_{m<k\leq n} (a_k)^2 \leq E[(S_n-S_m)^2] \leq K(p) \sum_{m<k\leq n} (a_k)^2, \]
where 
\[ K(p)   :=  \max \left\{ \dfrac{p}{1-p}, \dfrac{1-p}{p} \right\} (\ge 1). \]
\end{lemma}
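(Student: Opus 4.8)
The plan is to recognize $E[(S_n-S_m)^2]$ as a quadratic form governed by a Kac--Murdock--Szeg\H{o} Toeplitz matrix and to control its eigenvalues through the range of the associated Poisson kernel. First I would expand the increment $S_n-S_m=\sum_{m<k\le n}a_kX_k$, and using $(X_k)^2=1$ together with the correlation identity \eqref{eq:CorrMCnaive}, write, with $\rho:=2p-1\in(-1,1)$,
\[
E[(S_n-S_m)^2]=\sum_{m<k\le n}(a_k)^2+2\sum_{m<k<\ell\le n}a_ka_\ell\,\rho^{\ell-k}=\sum_{m<k,\ell\le n}a_ka_\ell\,\rho^{|k-\ell|}.
\]
A direct computation shows that $K(p)=(1+|\rho|)/(1-|\rho|)$, since for $p\ge 1/2$ one has $|\rho|=2p-1$ and $(1+|\rho|)/(1-|\rho|)=2p/(2(1-p))=p/(1-p)$, with the symmetric statement for $p\le 1/2$. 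Thus the claim is equivalent to saying that the symmetric Toeplitz matrix $(\rho^{|k-\ell|})_{m<k,\ell\le n}$ has all its eigenvalues in the interval $[1/K(p),\,K(p)]$.

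The key analytic input I would use is the Poisson-kernel representation of the geometric sequence,
\[
\rho^{|j|}=\frac{1}{2\pi}\int_{-\pi}^{\pi}\frac{1-\rho^2}{1-2\rho\cos\theta+\rho^2}\,e^{ij\theta}\,d\theta\qquad(j\in\mathbb{Z}),
\]
which lets me rewrite the finite quadratic form, after interchanging the finite sum with the integral, as
\[
E[(S_n-S_m)^2]=\frac{1}{2\pi}\int_{-\pi}^{\pi}f(\theta)\,\Bigl|\sum_{m<k\le n}a_k e^{ik\theta}\Bigr|^2\,d\theta,\qquad f(\theta):=\frac{1-\rho^2}{1-2\rho\cos\theta+\rho^2}.
\]
Since $1-2\rho\cos\theta+\rho^2$ ranges over $[(1-|\rho|)^2,(1+|\rho|)^2]$ as $\cos\theta$ runs over $[-1,1]$, the kernel obeys the uniform sandwich $1/K(p)=\tfrac{1-|\rho|}{1+|\rho|}\le f(\theta)\le \tfrac{1+|\rho|}{1-|\rho|}=K(p)$ for every $\theta$. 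Combining this with the Parseval identity $\tfrac{1}{2\pi}\int_{-\pi}^{\pi}|\sum_k a_ke^{ik\theta}|^2\,d\theta=\sum_{m<k\le n}(a_k)^2$ yields both inequalities at once, with the stated constants.

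I expect the lower bound to be the only real obstacle. The upper bound can in fact be obtained by an entirely elementary argument: the crude estimate $2|a_ka_\ell\,\rho^{\ell-k}|\le(a_k^2+a_\ell^2)|\rho|^{\ell-k}$ summed against the geometric series gives a factor $1+2|\rho|/(1-|\rho|)=K(p)$. However, the same crude bookkeeping degrades the lower bound to $1-2|\rho|/(1-|\rho|)$, which is wrong (and can even be negative), because it discards the positive-definiteness of the KMS matrix. The content of the lower estimate is precisely the sharp bound $f(\theta)\ge 1/K(p)$, i.e.\ the positivity of the Poisson kernel bounded away from zero; this is what the Fourier representation supplies cleanly and what an AM--GM argument cannot. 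The case $p=1/2$ ($\rho=0$, $f\equiv1$, $K(p)=1$) reduces to the exact identity $E[(S_n-S_m)^2]=\sum_{m<k\le n}(a_k)^2$, providing a consistency check.
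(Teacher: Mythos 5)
Your proof is correct and is essentially the paper's own argument: both represent the covariance $(2p-1)^{|k-\ell|}$ via the Poisson-kernel spectral density, sandwich that density uniformly between $1/K(p)$ and $K(p)$, and conclude by Parseval's identity. The extra remarks (the check $K(p)=(1+|\rho|)/(1-|\rho|)$ and the observation that a crude AM--GM bound only recovers the upper inequality) are sound but do not change the route.
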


\begin{proof}
By \eqref{eq:CorrMCnaive},
\begin{align}
E[X_k X_{\ell}] = (2p-1)^{|k-\ell|}\quad \mbox{for $k,\ell \in \mathbb{N}$,}
\label{eq:2-Correlation} 
\end{align}
where $0^0=1$. 
According to Example (9.3.23) in Grimmett and Stirzaker \cite{GrimmettStirzaker20},
for $\alpha \in (-1,1)$, 
\begin{align}
\alpha^{|n|} 
= \dfrac{1}{2\pi} \int_{-\pi}^{\pi} e^{in\lambda}  \cdot \dfrac{1-\alpha^2}{1-2\alpha \cos \lambda + \alpha^2}\,d\lambda\quad \mbox{for $n \in \mathbb{Z}$.}
\label{eq:Example(9.3.23)GS20}
\end{align}
By \eqref{eq:2-Correlation} and \eqref{eq:Example(9.3.23)GS20}, 
\begin{align}
E[X_k X_{\ell}] =  \int_{-\pi}^{\pi} e^{i(k-\ell)\lambda}  \cdot  \rho(\lambda)\,d\lambda \quad \mbox{for $k,\ell \in \mathbb{N}$,}
\label{eq:2-CorrelationSpectral} 
\end{align}
where 
\[
\rho(\lambda)   :=  \dfrac{1}{2\pi} \cdot \dfrac{1-(2p-1)^2}{1-2(2p-1)\cos \lambda + (2p-1)^2}.
\]
We have
\begin{align*}
E[(S_n-S_m)^2] &= E\left[ \left(\sum_{m < k \leq n} a_k X_k \right)^2\right]  = \sum_{m<k,\ell \leq n} a_ka_{\ell} E[X_kX_{\ell}] \\
&= \int_{-\pi}^{\pi} \left( \sum_{m<k,\ell \leq n} a_k e^{ik \lambda} \cdot \overline{ a_{\ell} e^{i\ell \lambda}} \right) \cdot  \rho(\lambda)\,d\lambda \\
&=\int_{-\pi}^{\pi} \left| \sum_{m<k \leq n} a_k e^{ik\lambda} \right|^2 \,\rho(\lambda) \, d\lambda \quad \mbox{for $n > m \geq 1$.}
\end{align*}
Using
$\dfrac{1}{2\pi K(p)} \leq \rho(\lambda) \leq  \dfrac{1}{2\pi} \cdot K(p)$ 
for $\lambda \in [-\pi,\pi]$,
and a simple identity
\begin{align*}
\dfrac{1}{2\pi} \int_{-\pi}^{\pi} \left| \sum_{m<k \leq n} a_k e^{ik\lambda} \right|^2 \,\, d\lambda &= \dfrac{1}{2\pi} \int_{-\pi}^{\pi} \left( \sum_{m<k,\ell \leq n} a_k e^{ik \lambda} \cdot \overline{ a_{\ell} e^{i\ell \lambda}} \right)\,d\lambda \\
&= \sum_{m<k \leq n} (a_k)^2,
\end{align*}
we obtain the desired conclusion.
\end{proof}

\begin{proof}[Proof of Theorem \ref{thm:CorrRWRademacherKhintchineKolmogorov}] First we prove (ii). By Theorem 1 in Yoshihara \cite{Yoshihara78Kodai}, there exists a positive constant $C_1$ such that for any $\{a_k : k \geq 1\}$ and any integers $n>m \geq 1$, 
\begin{align}
E[(S_n-S_m)^4] = E\left[ \left(\sum_{m<k \leq n} a_k X_k \right)^4\right] \leq C_1 \left\{\sum_{m<k \leq n} (a_k)^2\right\}^2. \label{ineq:Yoshihara78Kodaim=4}
\end{align}
Assume that $\{S_n\}$ converges a.s.
By Egorov's theorem, we can find an event $A$ and a positive constant $M$ such that
\[ \mbox{$P(A^c) \leq \dfrac{1}{4C_1K(p)^2}$, and $|S_n-S_m| \leq M$ for any $n>m \geq 1$ a.s. on the event $A$.} \]
By Lemma \ref{lem:WeightedCorrelatedRW2ndMomentSpectral} and the Cauchy--Schwarz inequality,
\begin{align*}
\dfrac{1}{K(p)} \sum_{m<k \le n} (a_k)^2 &\leq E[(S_n-S_m)^2] \\
&\leq M^2 \cdot P(A) + \left\{E[(S_n-S_m)^4]\right\}^{1/2} \cdot P(A^c)^{1/2} \\
&\leq M^2 + \left\{\sqrt{C_1} \sum_{m<k \le n}  (a_k)^2 \right\} \cdot  \dfrac{1}{2K(p)\sqrt{C_1}}.
\end{align*}
Hence we have
\begin{align*}
\sum_{m<k \le n} (a_k)^2 \leq 2K(p) \cdot M^2 \quad \mbox{for any $n>m \geq 1$}.
\end{align*}
This implies that $\displaystyle \sum_{k=1}^{\infty} (a_k)^2<+\infty$. 
Conversely, if $\displaystyle \sum_{k=1}^{\infty} (a_k)^2=+\infty$ then
$P(\mbox{$\{S_n\}$ converges})<1$.
Since $\{X_k\}$ is $\phi$-mixing, its tail $\sigma$-field is trivial (see e.g. Section 2.5 in \cite{Bradley05survey}). Thus 
we have $P(\mbox{$\{S_n\}$ converges})=0$.

Now we turn to the proof of (i). Assume that $\displaystyle \sum_{k=1}^{\infty} (a_k)^2<+\infty$. By Lemma \ref{lem:WeightedCorrelatedRW2ndMomentSpectral}, we have
\begin{align*}
E[(S_n-S_m)^2] 
\leq K(p) \sum_{m < k \leq n} (a_k)^2 \leq  K(p) \sum_{k=N}^{\infty} (a_k)^2
\quad \mbox{for $n>m \geq N$,} 
\end{align*}
which implies that
$\displaystyle \lim_{N \to \infty} \sup_{n,m \geq N} E[(S_n-S_m)^2]  = 0$.
There exists a random variable $S$ with $E[S^2]<\infty$ and 
$\displaystyle \lim_{n \to \infty} E[(S_n-S)^2]   = 0$.
By Fatou's lemma and Lemma \ref{lem:WeightedCorrelatedRW2ndMomentSpectral}, 
\begin{align}
E[(S_n-S)^2] \leq \liminf_{N \to \infty}E[(S_n-S_N)^2]  \leq K(p) \sum_{k=n}^{\infty} (a_k)^2.
\label{eq:WeightedCorrelatedRW2ndBound}
\end{align}
We can find a subsequence $\{n_j : j \in \mathbb{N} \}$ of $\mathbb{N}$ such that 
\begin{align}
  \sum_{k=n_j}^{\infty} (a_k)^2 \leq \dfrac{1}{j^2} \quad \mbox{for each $j \in \mathbb{N}$}.
  \label{ineq:MainThmSubseq}
\end{align}
This together with \eqref{eq:WeightedCorrelatedRW2ndBound} implies that
\[ \sum_{j=1}^{\infty} E[(S_{n_j}-S)^2] \leq \sum_{j=1}^{\infty} K(p) \sum_{k=n_j}^{\infty} (a_k)^2 \leq K(p)  \sum_{j=1}^{\infty} \dfrac{1}{j^2}  <+\infty. \]
Beppo Levi's theorem yields that $\displaystyle \lim_{j \to \infty} S_{n_j} = S$ a.s. 
Theorem 1 in M\'{o}ricz \cite{Moricz76ZW} together with \eqref{ineq:Yoshihara78Kodaim=4} implies that there is a positive constant $C_2$ such that for any $\{a_k : k \geq 1 \}$ and $n>m \geq 1$, 
\begin{align}
E\left[ \left(\max_{m < \ell \leq n} |S_{\ell}-S_m| \right)^4\right] &\leq C_2 \cdot E[(S_n-S_m)^4] 
\leq C_1 C_2 \left\{\sum_{m<k\leq n} (a_k)^2\right\}^2.
\label{ineq:MainThmMaxMoricz}
\end{align}
Using the monotone convergence theorem together with  \eqref{ineq:MainThmMaxMoricz} and \eqref{ineq:MainThmSubseq}, we have
\begin{align*}
E\left[\sum_{j=1}^{\infty}  \left( \max_{n_j  < n \leq n_{j+1}} |S_n - S_{n_j}| \right)^4\right] &=
 \sum_{j=1}^{\infty} E\left[ \left( \max_{n_j  < n \leq n_{j+1}} |S_n - S_{n_j}| \right)^4\right] \\
 &\leq C_1C_2 \sum_{j=1}^{\infty} \left\{\sum_{n_j<k \leq n_{j+1}} (a_k)^2\right\}^2 \\
 &\leq C_1C_2\sum_{j=1}^{\infty} \dfrac{1}{j^4} < +\infty,
\end{align*} 
which implies that $\displaystyle \sum_{j=1}^{\infty}  \left( \max_{n_j  < n \leq n_{j+1}} |S_n - S_{n_j}| \right)^4<+\infty$ a.s., and hence
\[ \lim_{j \to \infty} \max_{n_j  < n \leq n_{j+1}} |S_n - S_{n_j}| = 0 \quad \mbox{a.s.} \]
Thus we have $\displaystyle \lim_{n \to \infty} S_n= S$ a.s.
\end{proof}

\section{Proof of Theorem \ref{thm:main_theorem}}
\label{sec:ProofTakagiA}



For each \(h\in (0,1/r)\), we can find a unique integer \(m=m(h)\) such that
\begin{equation}\label{eq:difinitionofh}
    \frac{1}{r^{m+1}}< h\le \frac{1}{r^{m}}\left(\leq \frac{1}{2r^{m-1}}\right).
\end{equation}
 Note that 
 \begin{equation} \label{asymp:m(h)}
 m(h)\sim \log_{r}(1/h) \quad \mbox{as \(h\downarrow 0\)}, 
\end{equation}
where \(a(h)\sim b(h)\) as \(h\downarrow 0\) means that \(\displaystyle \lim_{h\downarrow 0} \frac{a(h)}{b(h)}=1\). 
Our aim is to obtain the following approximation of the increment $f_{r}(x+h)-f_{r}(x)$ by the ERWVRP with memory parameter $p_r$, at time $m(h)$:
\begin{align}
    f_{r}(x+h)-f_{r}(x)&= h \cdot s_{m(h)}(x) +o\left( h\sqrt{m(h)} \right)\quad \mbox{as $h \downarrow 0$ for $\mu$-a.e. $x$}.
    \label{eq:GamkrelidzeDecomposition}
\end{align}
The approximation procedure for even $r$ is more or less similar to that for $r=2$, while we need a new idea for establishing a suitable approximation for odd $r$.

\begin{figure}[t]
\begin{center}
\begin{tabular}{cc}
\includegraphics[width=5.5cm]{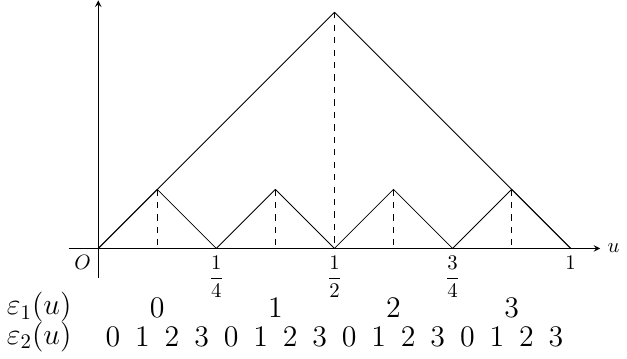}
&
\includegraphics[width=5.5cm]{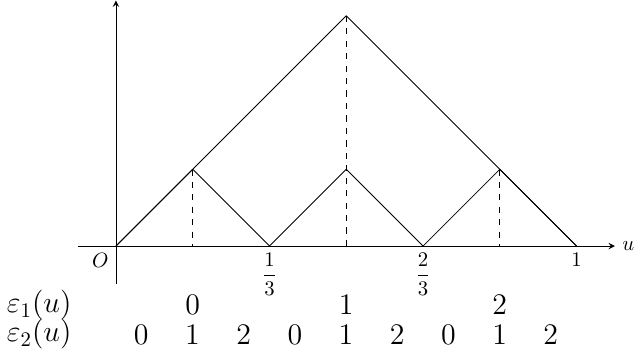}
\end{tabular}
\end{center}
\caption{The graphs of $\psi_1$ and $\psi_2$ for $r=4$ (left) and $r=3$ (right).}
\label{fig:TakagivdWphi1phi2}
\end{figure}

To obtain \eqref{eq:GamkrelidzeDecomposition}, we want to know for which values of $k$ the function $\psi_k$ is linear on $[x,x+h]$, and hence $\psi_k(x+h)-\psi_k(x)=\psi_k^+ (x) \cdot h$. By looking at the graph of $\psi_k$ (Figure \ref{fig:TakagivdWphi1phi2}), this is clearly the case if and only if 
\begin{align} \label{cond:psiklinearGraph}
\begin{aligned}
&\mbox{the points $x$ and $x+h$ belong to the same interval of the partition} \\
&\left\{ 0,\dfrac{1}{2r^{k-1}},\dfrac{2}{2r^{k-1}},\ldots,\dfrac{2r^{k-1}-1}{2r^{k-1}},1\right\}.
\end{aligned}
\end{align}
Here we consider $f_r(x)$ to be defined also to the right of $1$ by periodicity.

\subsection{Proof of Theorem \ref{thm:main_theorem} (1)}


For \(x\in [0,1)\) and  \(h\in (0,1/r)\), we consider \(r\)-ary expansions of \(x\) and \(x+h\):
\begin{equation}
    x=\sum_{k=0}^{\infty} \dfrac{{\varepsilon}_{k}}{r^k}, \quad \mbox{and} \quad x+h=\sum_{k=0}^{\infty} \dfrac{{\varepsilon}_{k}'}{r^k},
\end{equation}
where $\varepsilon_0=0$, $\varepsilon'_0 \in \{0,1\}$, and \({\varepsilon}_{k},{\varepsilon}_{k}' \in \{0,1,\dots,r-1\}\) for $k \in \mathbb{N}$. For $r$-adic rationals, we choose the representation ending in all zeros.

When $r$ is even, sufficient for \eqref{cond:psiklinearGraph} is that
\begin{align} \label{cond:psiklinearEVEN}
&\mbox{$x$ and $x+h$ have the same base-$r$ digits up to $k$-th,}
\end{align}
in the sense that $\varepsilon_\ell = \varepsilon'_\ell$ for all $\ell=0,1,\ldots,k$ (see Figure \ref{fig:TakagivdWphi1phi2}).

  Let
\begin{equation}
    k_{0}=k_{0}(x,h) := 
    \begin{cases}
        \max \{ k\in  \mathbb{Z}_+ : {\varepsilon}_{0}={\varepsilon}_{0}',\dots,{\varepsilon}_{k}={\varepsilon}_{k}' \} & \text{if $\varepsilon_{0}=\varepsilon_{0}'$},\\
        -1 & \text{if $\varepsilon_{0}\neq \varepsilon_{0}'$}.
    \end{cases}
\end{equation}
Note that $-1 \le k_{0} \le m$: If \(k_{0}\ge m+1\) then we have
\begin{equation*}
        h=\sum_{k=m+2}^{\infty}\frac{\varepsilon_{k}' - \varepsilon_{k}}{r^{k}}\le (r-1)\sum_{k=m+2}^{\infty}\frac{1}{r^{k}}=\frac{1}{r^{m+1}},
\end{equation*}
a contradiction. By the above observation, we have 
\begin{equation}
\label{eq:LinearEvenk0}
\psi_k(x+h)-\psi_k(x)=\psi_k^+ (x) \cdot h \quad \mbox{if $k \leq k_0(x,h)$.}
\end{equation}

For fixed $h \in (0,1/r)$, we can regard $k_0(x,h)$ as a $\{-1,0,1,\ldots,m(h)\}$--valued random variable. The following lemma shows that \(k_{0}(x,h)\) is not too far from \(m(h)\) with probability one.

\begin{lemma}\label{lem:p-k0convergentzero}
    $\displaystyle  \limsup_{h \downarrow 0} \dfrac{m(h)-k_{0}(x,h)}{2\log_r m(h)} \leq 1$
  for $\mu$-a.e. $x$. 
\end{lemma}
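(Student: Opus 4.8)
The plan is to reduce the event $\{k_0(x,h)<\ell\}$ to a statement about the fractional part $\{r^\ell x\}$, and then run a first Borel--Cantelli argument after eliminating the continuum of admissible $h$ by a monotonicity observation.

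First I would note that, by the definition of $k_0(x,h)$, the digits $\varepsilon_0,\dots,\varepsilon_\ell$ of $x$ coincide with those of $x+h$ precisely when $\lfloor r^\ell x\rfloor=\lfloor r^\ell(x+h)\rfloor$; writing $r^\ell x=\lfloor r^\ell x\rfloor+\{r^\ell x\}$, this is equivalent to $\{r^\ell x\}+r^\ell h<1$. Hence, for every $h\in(0,1/r)$ and every integer $0\le \ell\le m(h)$,
\[
\{x : k_0(x,h)<\ell\}=\bigl\{x : \{r^\ell x\}\ge 1-r^\ell h\bigr\}.
\]
Since $\{r^\ell x\}$ is uniformly distributed on $[0,1)$ under $\mu$, this gives the exact value $\mu\bigl(k_0(\cdot,h)<\ell\bigr)=r^\ell h$ (note $r^\ell h\le r^{\ell-m(h)}\le 1$ for $\ell\le m(h)$).

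Next I would remove the dependence on the continuous parameter $h$ by fixing the integer $m$ and taking the union over all $h$ with $m(h)=m$, i.e. $h\in(r^{-(m+1)},r^{-m}]$ by \eqref{eq:difinitionofh}. As the set on the right above increases with $h$, this union is attained at $h=r^{-m}$:
\[
\bigcup_{r^{-(m+1)}<h\le r^{-m}}\{x:k_0(x,h)<\ell\}=\bigl\{x:\{r^\ell x\}\ge 1-r^{\ell-m}\bigr\},
\]
a set of measure exactly $r^{\ell-m}$. Fixing $\varepsilon>0$ and choosing $\ell=\ell_m:=\lceil m-(2+\varepsilon)\log_r m\rceil$, the bad set at scale $m$ has measure at most $r\cdot m^{-(2+\varepsilon)}$, which is summable in $m$. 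By the first Borel--Cantelli lemma, for $\mu$-a.e. $x$ there is an $M(x)$ so that $m(h)-k_0(x,h)\le (2+\varepsilon)\log_r m(h)$ for every $h$ with $m(h)\ge M(x)$.

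Finally, since $m(h)\to\infty$ as $h\downarrow 0$ by \eqref{asymp:m(h)}, this yields $\limsup_{h\downarrow 0}\frac{m(h)-k_0(x,h)}{2\log_r m(h)}\le \frac{2+\varepsilon}{2}$ for $\mu$-a.e. $x$, and letting $\varepsilon$ run through a sequence decreasing to $0$ gives the claim. The only real obstacle is passing from a single $h$ to all admissible $h$ at a given scale $m$, which the monotonicity of the event in $h$ settles at no cost; in fact the identical computation with the threshold $(1+\varepsilon)\log_r m$ is still summable and yields the sharper bound $1/2$, so the constant $1$ in the statement holds with room to spare.
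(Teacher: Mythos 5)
Your proof is correct, and while it shares the same overall skeleton as the paper's argument (a tail estimate on $m-k_0$, a first Borel--Cantelli step over a discrete family of scales, and monotonicity in $h$ to cover all admissible increments), both technical ingredients are obtained differently. The paper derives the inequality $\mu(m-k_0\ge j)\le r^{-(j-1)}$ by a digit-counting argument --- the event forces a run of $j-1$ digits equal to $r-1$ in the expansion of $x$, with the boundary case $k_0=-1$, i.e.\ $x\in[1-h,1)$, treated separately --- then applies Borel--Cantelli only along the subsequence $h=r^{-\ell}$ and transfers to general $h$ using that $k_0(x,\cdot)$ is pointwise nonincreasing. You instead compute $\mu\bigl(k_0(\cdot,h)<\ell\bigr)=r^\ell h$ \emph{exactly}, via the identity $\{k_0(\cdot,h)<\ell\}=\bigl\{x:\{r^\ell x\}\ge 1-r^\ell h\bigr\}$ and the $\mu$-uniformity of $\{r^\ell x\}$ (which is legitimate under the paper's all-zeros convention for $r$-adic rationals, since then $\varepsilon_k=\lfloor r^kx\rfloor-r\lfloor r^{k-1}x\rfloor$, and which absorbs the case $k_0=-1$ automatically), and you perform the discretization at the level of events, taking the union over the whole scale class $\{h:m(h)=m\}$ \emph{before} invoking Borel--Cantelli --- the same monotonicity-in-$h$ fact, used one step earlier and at no cost since the union is attained at the endpoint $h=r^{-m}$. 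What your route buys is the exact measure rather than an upper bound, a cleaner case analysis, and a transparent view of the sharp constant: your closing remark is right that the threshold $(1+\varepsilon)\log_r m$ is still summable, so one in fact gets $\limsup_{h\downarrow 0}\frac{m(h)-k_0(x,h)}{2\log_r m(h)}\le \frac12$ a.e.; to be fair, the same improvement is already available from the paper's bound $r^{-(j-1)}$, and the paper presumably does not optimize because the application only requires $m(h)-k_0(x,h)=o\bigl(\sqrt{m(h)}\bigr)$.
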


\begin{proof} First we show that
\begin{align} \label{ineq:m-k0tail}
\mu(m-k_0 \geq j) \le r^{-(j-1)} \quad \mbox{for $j \in \mathbb{N}$.}
\end{align}
This is obvious for $j > m+1$. For $1 \leq j \leq m$, if $m-k_0 \geq j$ then it must be the case that $\varepsilon_{k_0+1} = \cdots = \varepsilon_{m-1} = r-1$, so the base-$r$ expansion of $x$ must have at least $j-1$ digits $r-1$ in a row. For fixed $h$, this happens with probability $r^{-(j-1)}$. Thus, $\mu(m-k_0 \geq j) \le r^{-(j-1)}$. 
Finally, by  \eqref{eq:difinitionofh}, we have
\begin{align*}
\mu(m-k_0 \geq m+1) &= \mu(\{ x \in [0,1) : k_0(x,h)=-1\}) \\
&= \mu\bigl([1-h,1)\bigr) = h \leq r^{-m} = r^{-\{(m+1)-1\}}.
\end{align*}
This completes the proof of \eqref{ineq:m-k0tail}.

Noting that $m\left(r^{-\ell}\right)=\ell$ for each $\ell \in \mathbb{N}$, 
\begin{equation*}
 \sum_{\ell=1}^{\infty} \mu\left(\frac{m(r^{-\ell})-k_{0}(x,r^{-\ell})}{2\log_r m(r^{-\ell})}>1 \right)<+\infty.
\end{equation*}
By the Borel-Cantelli lemma we have $\displaystyle \limsup_{\ell \to \infty} \dfrac{\ell - k_0(x,r^{-\ell})}{2\log_r m(r^{-\ell})} \leq 1$ for $\mu$-a.e.  $x$. 
Take any positive sequence $\{ h_n \}$ with $\displaystyle \lim_{n \to \infty} h_n=0$. For each $n$, we can find a positive integer $\ell(n)$ satisfying $r^{-\ell(n)-1} < h_n \leq r^{-\ell(n)}$. Noting that $k_0(x,h)$ is nonincreasing in $h$ for fixed $x$, we have 
\[ 
 \dfrac{m(h_n)- k_0(x,h_n)}{2\log_r m(h_n)} \leq  \dfrac{\ell(n) -  k_0\left(x,r^{-\ell(n)}\right)}{2\log_r m(r^{-\ell(n)})} \quad \mbox{for each $n$}. \]
Since $\displaystyle \lim_{n \to \infty} \ell(n)=+\infty$, we have $\displaystyle \limsup_{n \to \infty}  \dfrac{m(h_n)- k_0(x,h_n)}{2\log_r m(h_n)} \leq 1$ $\mu$-a.e. $x$, and hence $\displaystyle \limsup_{h \downarrow 0}  \dfrac{m(h)- k_0(x,h)}{2\log_r m(h)} \leq 1$ $\mu$-a.e. $x$. This completes the proof. 
\end{proof}

Now we prove \eqref{eq:GamkrelidzeDecomposition}. By \eqref{eq:LinearEvenk0},
\begin{align}
    f_{r}(x+h)-f_{r}(x)&= h \cdot  s_{m(h)}(x)  +\sum_{k=k_{0}+1}^{m(h)} \{\psi_k(x+h)-\psi_k(x) - h \cdot \psi_{k}^{+}(x)\} \notag \\
    &\quad +  \sum_{k=m(h)+1}^{\infty} \{\psi_k(x+h)-\psi_k(x) \} .
    \label{eq:GamkrelidzeDecompositionVar}
\end{align}
Since $\psi_k$ is Lipschitz continuous and $|\psi_k^+(x)| \leq 1$, 
\begin{align*}
\left| \sum_{k=k_{0}+1}^{m(h)} \{\psi_k(x+h)-\psi_k(x) - h \cdot \psi_{k}^{+}(x)\} \right| \leq 2h(m-k_0).
\end{align*}
By \eqref{eq:difinitionofh}, we have
\begin{equation*}\label{eq:sigma3inequality}
    \left| \sum_{k=m(h)+1}^{\infty} \{\psi_k(x+h)-\psi_k(x) \} \right| \le \sum_{k=m+1}^{\infty}\frac{1}{2r^{k-1}}=\frac{r^{2}}{r-1} \cdot \frac{1}{r^{m+1}}\le \frac{r^{2}}{r-1}h.
\end{equation*}
Those estimates together with Lemma \ref{lem:p-k0convergentzero} and Eq. \eqref{asymp:m(h)} imply that the second and the third terms in the right hand side of \eqref{eq:GamkrelidzeDecompositionVar} are $o\left( h\sqrt{m(h)} \right)$ as  \(h\downarrow 0\). This completes the proof of  \eqref{eq:GamkrelidzeDecomposition}.

Using the CLT for the one-dimensional symmetric simple random walk (Eq. \eqref{eq:CorrRWCLT} in Theorem \ref{thm:CorrRWCLTLIL}, with $p=1/2$), Eq. \eqref{asymp:m(h)} and Slutsky's lemma, 
\begin{equation*}\label{eq:SumsofPsiConvergenceinDistribution}
    \lim_{h\downarrow 0} \mu\left(\left\{ x\in [0,1) : \frac{s_{m(h)}(x)}{\sqrt{\log_{r}(1/h)}}\le y \right\} \right) = \int_{-\infty}^{y}\frac{1}{\sqrt{2\pi}}e^{-t^{2}/2}dt\quad \mbox{for $y \in \mathbb{R}$.}
\end{equation*}
By  \eqref{eq:GamkrelidzeDecomposition} and another application of Slutsky's lemma, we have
\begin{align*}
                    \lim_{h\downarrow 0} \mu\left(\left\{ x\in [0,1) : \frac{f_{r}(x+h)-f_{r}(x)}{h\sqrt{\log_{r}(1/h)}} \le y \right\} \right)=\int_{-\infty}^{y}\frac{1}{\sqrt{2\pi}}e^{-t^{2}/2}dt \quad \mbox{for $y \in \mathbb{R}$.}
 \end{align*}
By considering $f_r(1-x)$, we can obtain the corresponding result for $h \uparrow 0$. This completes the proof of the CLT \eqref{eq:CLTeven}.

Using the LIL for the one-dimensional symmetric simple random walk (Eq. \eqref{eq:CorrRWLIL} in Theorem \ref{thm:CorrRWCLTLIL}, with $p=1/2$), we can prove the LIL \eqref{eq:LILeven} in a similar way as above.

\subsection{Proof of Theorem \ref{thm:main_theorem} (2)}

In view of Figure \ref{fig:TakagivdWphi1phi2}, we can see that the condition \eqref{cond:psiklinearGraph} is satisfied if and only if 
\begin{quote}
$x$ and $x+h$ have the same base-$r$ digits up to $(k-1)$-th, and also
$x + \dfrac{1}{2r^{k-1}}$ and $x + h + \dfrac{1}{2r^{k-1}}$ have the same base-$r$ digits up to $(k-1)$-th.
\end{quote}
When $r$ is odd, the second part of the condition is equivalent to 
 \begin{align*} 
&\mbox{$x+\dfrac{1}{2}$ and $x+h+\dfrac{1}{2}$ having the same base-$r$ digits up to $(k-1)$-th.}
\end{align*}
To see this, note that
\begin{align*}
\left\lfloor r^{k-1} \left( x+\dfrac{1}{2} \right) \right\rfloor &= \left\lfloor r^{k-1} \left( x+\dfrac{1}{2r^{k-1}} \right) + \dfrac{r^{k-1}-1}{2} \right\rfloor \\
&= \left\lfloor r^{k-1} \left( x+\dfrac{1}{2r^{k-1}} \right) \right\rfloor +  \dfrac{r^{k-1}-1}{2}, 
\intertext{where we used $r^{k-1}-1$ is even, and similarly}
\left\lfloor r^{k-1} \left( x+h+\dfrac{1}{2} \right) \right\rfloor  &= \left\lfloor r^{k-1} \left( x+h+\dfrac{1}{2r^{k-1}} \right) \right\rfloor +  \dfrac{r^{k-1}-1}{2}.
\end{align*}
Now the equivalence readily follows.

We introduce
\begin{align*}
\widehat{k}_0 = \widehat{k}_0(x,h)   :=   k_0\left( x+ \dfrac{1}{2},h \right), 
\end{align*}
and $k_0 \wedge \widehat{k}_0=(k_0 \wedge \widehat{k}_0)(x,h)  :=  \min\{ k_0(x,h),\widehat{k}_0(x,h) \}$.
Then we have 
\begin{equation}
\label{eq:LinearOddk0Hatk0}
\psi_k(x+h)-\psi_k(x)=\psi_k^+ (x) \cdot h \quad \mbox{if $k \leq (k_0 \wedge \widehat{k}_0)(x,h) $.}
\end{equation}

By the next lemma, when $r$ is odd, we can use $(k_{0}\wedge \widehat{k}_{0})(x,h)$ in place of $k_0(x,h)$.

\begin{lemma}\label{lem:p-k0hatk0convergentzero} 
$\displaystyle  \limsup_{h \downarrow 0} \dfrac{m(h)-(k_{0}\wedge \widehat{k}_{0})(x,h)}{2\log_r m(h)} \leq 1$ as $h \downarrow 0$ for $\mu$-a.e. $x$.    
 \end{lemma}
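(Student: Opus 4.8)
The plan is to reduce the statement to Lemma~\ref{lem:p-k0convergentzero} by exploiting the translation-invariance of Lebesgue measure, thereby avoiding any repetition of the Borel--Cantelli argument. The starting point is the pointwise identity, valid for every $x$ and every $h \in (0,1/r)$,
\[
m(h) - (k_0 \wedge \widehat{k}_0)(x,h) = \max\bigl\{\, m(h) - k_0(x,h),\ m(h) - \widehat{k}_0(x,h) \,\bigr\}.
\]
Both terms on the right are nonnegative, since $k_0(x,h) \le m(h)$ and $\widehat{k}_0(x,h) = k_0(x+1/2,h) \le m(h)$, this bound depending only on $h$ and hence applying to the shifted base point $x+1/2$ as well.

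First I would transfer Lemma~\ref{lem:p-k0convergentzero} from $k_0$ to $\widehat{k}_0$. Let $R \colon x \mapsto x + 1/2 \pmod 1$ denote rotation by $1/2$, a measure-preserving bijection of $[0,1)$, and set
\[
G(y) \coloneq \limsup_{h \downarrow 0} \frac{m(h) - k_0(y,h)}{2\log_r m(h)}.
\]
Lemma~\ref{lem:p-k0convergentzero} asserts that $A \coloneq \{ y \in [0,1) : G(y) \le 1 \}$ satisfies $\mu(A) = 1$. Since $\widehat{k}_0(x,h) = k_0(Rx, h)$ (interpreting $x + 1/2$ modulo $1$ by periodicity), the analogous limsup for $\widehat{k}_0$ is exactly $G(Rx)$, which is $\le 1$ precisely when $x \in R^{-1}A$; invariance of $\mu$ under $R$ gives $\mu(R^{-1}A) = \mu(A) = 1$.

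The conclusion then follows on the full-measure set $A \cap R^{-1}A$ from the elementary identity
\[
\limsup_{h \downarrow 0} \max\{f(h), g(h)\} = \max\Bigl\{ \limsup_{h \downarrow 0} f(h),\ \limsup_{h \downarrow 0} g(h) \Bigr\},
\]
which holds for arbitrary real functions (let $\delta \downarrow 0$ in $\sup_{0<h<\delta}\max\{f,g\} = \max\{\sup_{0<h<\delta} f, \sup_{0<h<\delta} g\}$). Applying it with $f(h) = \{m(h) - k_0(x,h)\}/\{2\log_r m(h)\}$ and $g(h)$ the same expression with $\widehat{k}_0$ in place of $k_0$, both limsups are at most $1$ on $A \cap R^{-1}A$, so their maximum --- which by the opening identity equals the limsup appearing in the statement --- is at most $1$.

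I do not anticipate a genuine obstacle here: the substance is the reduction, not a new tail estimate. The only points requiring care are the measurability of $G$ (so that $A$ is a legitimate full-measure Borel set) and the periodicity convention making $k_0(x+1/2,h)$ well defined when $x + 1/2 \ge 1$; both are routine and do not affect the almost-everywhere conclusion.
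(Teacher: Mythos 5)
Your proof is correct, and it takes a genuinely different route from the paper's. The paper also uses translation invariance, but only to conclude that $k_0$ and $\widehat{k}_0$ are equidistributed; it then applies the union bound at the level of tail events, deducing $\mu\bigl(m-k_0\wedge\widehat{k}_0 \ge j\bigr) \le 2\mu(m-k_0\ge j)\le 2r^{-(j-1)}$ from \eqref{ineq:m-k0tail}, and re-runs the machinery of Lemma \ref{lem:p-k0convergentzero} --- Borel--Cantelli along $h=r^{-\ell}$ plus the monotonicity interpolation in $h$ --- with the harmless extra factor $2$. You instead use Lemma \ref{lem:p-k0convergentzero} as a black box, performing the union bound at the level of the two exceptional null sets rather than the tail events: the measure-preserving rotation $R$ transfers the a.e.\ limsup bound from $k_0$ to $\widehat{k}_0(x,h)=k_0(Rx,h)$, and the pointwise identity $m-(k_0\wedge\widehat{k}_0)=\max\{m-k_0,\,m-\widehat{k}_0\}$ together with $\limsup_{h\downarrow 0}\max\{f,g\}=\max\{\limsup f,\limsup g\}$ (correctly a limsup-specific fact, which fails for liminf) closes the argument on $A\cap R^{-1}A$. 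What your route buys: no repetition of the Borel--Cantelli and interpolation steps, no tail estimate for $\widehat{k}_0$ at all, and a transparent reason why the union-bound factor $2$ cannot alter the constant in the lemma; it would apply verbatim to any finite number of translates and to any a.e.\ limsup bound, independent of the specific tail decay. What the paper's route keeps is an explicit quantitative tail bound for $k_0\wedge\widehat{k}_0$, in the same style as \eqref{ineq:m-k0tail}. The two points you flag are indeed the only delicate ones and both check out: since $h<1/r\le \frac{1}{2}$, for $x\in[\frac{1}{2},1)$ both $x+\frac{1}{2}$ and $x+\frac{1}{2}+h$ lie in $[1,2)$, so digit agreement up to the $k$-th place for this pair is equivalent to that for $\bigl(x-\frac{1}{2},\,x-\frac{1}{2}+h\bigr)$, which legitimizes $\widehat{k}_0(x,h)=k_0(Rx,h)$ exactly as the paper's periodic extension intends; and measurability of your set $A$ is routine because, by the monotonicity of $k_0(x,\cdot)$ used in the proof of Lemma \ref{lem:p-k0convergentzero}, the limsup over real $h$ is controlled by a countable limsup along $h=r^{-\ell}$.
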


\begin{proof}
Since the Lebesgue measure is translation-invariant, it follows that $k_0$ and $\widehat{k}_0$ have the same distribution (because we are extending the definition of $f_r$ to the right). By \eqref{ineq:m-k0tail}, we have
\begin{align*}
\mu\left( m - k_{0}\wedge \widehat{k}_{0} \geq j \right) &\leq \mu(m-k_0 \geq j) + \mu\left(m-\widehat{k}_0 \geq j \right) \\
&=2\mu(m-k_0 \geq j) \le 2r^{-(j-1)}\quad \mbox{for $j \in \mathbb{N}$.}
\end{align*}
The rest of the proof is quite similar to that of Lemma \ref{lem:p-k0convergentzero}.
\end{proof}

With Lemma \ref{lem:p-k0hatk0convergentzero}, we can prove Theorem \ref{thm:main_theorem} (2) in a similar way as Theorem \ref{thm:main_theorem} (1). Note that $\sqrt{\dfrac{p_r}{1-p_r}} = \sqrt{\dfrac{r+1}{r-1}}$.

\section{Proof of Theorem \ref{thm:DIffTakagivdW}}
\label{sec:ProofTheoremDIffTakagivdW} 

The main theorem in Ferrera, G\'{o}mez-Gil and Llorente \cite[Theorem 1]{FerreraGomezGil25} can be applied to $f_{r,a}(x)$ for all $r \ge 2$, and $f_{r,a}(x)$ has nowhere finite derivative if and only if $\{a_k : k \geq 1\}$ {\it does not} satisfy $\displaystyle \lim_{k \to \infty} a_k=0$. 
On the other hand, consequences of several general results obtained by Ferrera and G\'{o}mez-Gil \cite{FerreraGomezGil20Diff} for $f_{r,a}(x)$ are summarized in \cite[Example 5.1]{FerreraGomezGil20Diff}: Notably, $f_{r,a}$ are differentiable at $\mu$-a.e. $x$ if and only if $\displaystyle \sum_{k=1}^{\infty} a_k \psi_k^+(x)$ converges $\mu$-a.e. $x$. Combined with Theorem \ref{thm:CorrRWRademacherKhintchineKolmogorov} with $p=p_r$, we obtain all the statements in Theorem \ref{thm:DIffTakagivdW}.

\begin{remark}
A result corresponding to our Theorem \ref{thm:CorrRWRademacherKhintchineKolmogorov} with $p=1/2$ is found in
Ferrera and G\'{o}mez-Gil \cite[Theorems 3.3 and 3.5]{FerreraGomezGil20Diff}, and hence Theorem \ref{thm:DIffTakagivdW} for even $r$ has already been settled by the theory of Ferrera, G\'{o}mez-Gil and Llorente \cite{FerreraGomezGil20Diff,FerreraGomezGil25}.
\end{remark}

\section*{Acknowledgement}
The authors thank the anonymous referee for detailed and helpful comments. They really appreciate the referee's suggestions which enable them to improve the presentation of Section 4 considerably.



\end{document}